\newtheorem{theorem}{Theorem}[section]
\newtheorem{lemma}{Lemma}[section]
\theoremstyle{definition}
\numberwithin{equation}{section}
\begin{document}
\setcounter{page}{1}

\vspace*{1.0cm}
\title[Norm of the Ces\`aro operator between some spaces of analytic functions]
{Norm of the Ces\`aro operator between some spaces of analytic functions}
\author[ S. Ye, B. Ji, Q. Zheng]{ Shanli Ye$^{*}$,  Bin Ji, Qisong Zheng}
\maketitle
\vspace*{-0.6cm}

\begin{center}
{\footnotesize {\it

School of Science, Zhejiang University of Science and Technology, Hangzhou 310023, China.

}}\end{center}

\vskip 4mm {\small\noindent {\bf Abstract.}
In this paper, we determine the exact norm of the Ces\`aro operator $\mathcal{C}$ on the Korenblum space $H^\infty_\alpha$ for $0 < \alpha \leq \frac12$ and on the logarithmically weighted space $H^\infty_{\alpha,\log}$ for $0 < \alpha < 1$. Moreover, we compute its norm when acting from $H^\infty_{\alpha,\log}$ to $H^\infty_\alpha$. Finally, we establish lower and upper bounds for the norm of $\mathcal{C}$ on the $\alpha$-Bloch space $\mathcal{B}^\alpha$ for $\alpha > 1$, and from the Hardy space $H^\infty$ to $\mathcal{B}^\alpha$ for $\alpha\geq 1$.

\noindent {\bf Keywords.}
Operator norms,  Ces\`aro operators,  Korenblum spaces, Bloch spaces. }

\renewcommand{\thefootnote}{}
\footnotetext{ $^*$Corresponding author.
\par
E-mail addresses:  slye@zust.edu.cn (S. Ye).
\par
}

\section{Introduction}\label{s1}
Let $(a) = \{a_k\}_{k=0}^{\infty}$ denote a sequence of complex numbers. The classical  Ces\`aro operator, which acts on sequences of this form, is defined by the relation
\begin{align*}
\mathcal{C}((a)) := \left( \frac{1}{n+1} \sum{k=0}^{n} a_k \right)_{n=0}^{\infty}.
\end{align*}

The boundedness of this operator on the $\ell^p$ spaces was already established in the 1920s, through the work of Hardy \cite{Har1} and Landau \cite{Lan}.

An alternative interpretation of the  Ces\`aro operator is as an operator on functions defined on the unit open disc $\mathbb{D} \subset \mathbb{C}$. More precisely, for an analytic function $f \in H(\mathbb{D})$ with Taylor expansion $f(z) = \sum_{k=0}^{\infty} a_k z^k$ ($z \in \mathbb{D}$), the  Ces\`aro operator $\mathcal{C}: H(\mathbb{D}) \to H(\mathbb{D})$ is defined by

\begin{align*}
\mathcal{C}(f)(z) := \sum_{n=0}^{\infty} \left( \frac{1}{n+1} \sum_{k=0}^{n} a_k \right) z^n = \int_0^1 \frac{f(tz)}{1 - tz} dt.
\end{align*}

The boundedness and compactness of the Ces\`aro operator have been the subject of extensive research in complex and functional analysis, as documented in \cite{Dan,Gal,Mia,Xia}. Initial studies of its boundedness on Hardy spaces $H^p$ for $1 < p < \infty$ drew on Hardy's results concerning Fourier series \cite{Har2} and M. Riesz's theorem on conjugate functions \cite[Theorem 4.1]{Dur1}. By employing the theory of composition operator semigroups, Siskakis \cite{Sis3} provided an alternative demonstration of this boundedness and extended the investigation to the case $p = 1$ in \cite{Sis2}. A separate proof for the $p = 1$ case was later contributed by Giang \cite{Gia}. Subsequently, Miao \cite{Mia} established that the Ces\`aro operator remains bounded on $H^p$ for all exponents $0 < p < 1$.

  Recently, Galanopoulos, Girela, and Merch$\acute{a}$n \cite{Gal} introduced a Ces\`aro-like operator $\mathcal{C}_{\mu}$, which is a natural generalization of the classical  Ces\`aro operator $\mathcal{C}$. They systematically  studied this operator acting on  various spaces of analytic functions, such as Hardy spaces, Bergman spaces, and Bloch spaces.  Over the last two decades, several other generalized forms of the classical Ces\`aro operator have been introduced and studied; for these, the interested reader is referred to \cite{Aba, Agr,Alb, And,Dai, Gal1, Nai, Ste}.

  However, there are relatively few works on the exact norm computation of the classical Ces\`aro operator. The main known results in this direction are due to Siskakis. In \cite{Sis3}, he established that $\|\mathcal{C}\|_{H_p} = p$ for $p \geq 2$, while for $1 \leq p < 2$, the norm satisfies $p \leq \|\mathcal{C}\|_{H_p} \leq 2$. In \cite{Sis1}, he showed that $\|\mathcal{C}\|_{A_p} = p/2$ for $p \geq 4$, and $p/2 \leq \|\mathcal{C}\|_{A_p} \leq 2$ for $1 \leq p < 4$. In \cite{Dan}, Danikas and Siskakis also obtained $\|\mathcal{C}\|_{H^{\infty} \to \mathrm{BMOA}} = 1 + \pi/\sqrt{2}$.

In this article, we study the norm of $\mathcal{C}$ acting between certain spaces of analytic functions. Our paper is organized as follows. In Sect.~2, we introduce some notation. In Sect.~3, we determine the exact value of the norm of $\mathcal{C}$ on the Korenblum space $H^\infty_\alpha$  for $0<\alpha\leq \frac12$, which is $\frac{1}{\alpha}$. In Sect.~4, for $0<\alpha<1$, we calculate the exact value of the norm from the logarithmically weighted Korenblum space $H^\infty_{\alpha,\log}$ to the Korenblum space $H^\infty_\alpha$. In Sect.~5, we calculate the exact value of  the norm  on  the logarithmically weighted Korenblum space $H^\infty_{\alpha,\log}$. In Sect.~6, we obtain both the lower and upper bounds of the norm on $\alpha$-Bloch space $\mathcal{B}^\alpha$.  In Sect.~7,  we offer both the lower and upper bounds of  the norm of the Ces\`aro operator from the Hardy space  $H^\infty$ to  $\alpha$-Bloch spaces, show that  $\mathcal{C}: H^\infty_\alpha\to \mathcal{B}^{\alpha}$ is not bounded when $0<\alpha<1$.

\section{Notation Preliminaries}\label{s1}

Let $\mathbb{D}$ denote the open unit disk of the complex plane $\mathbb{C}$, and let $H(\mathbb{D})$ denote the set of all analytic functions in $\mathbb{D}$.

Recall that for $0 < p < \infty$, the Hardy space $H^p$ consists of all analytic functions $f \in H(\mathbb{D})$ satisfying
$$\|f\|_{H^p}=\sup_{0\leq r <1} M_p(r,f)<\infty,$$
where
$$M_p(r,f)=\left( \frac{1}{2\pi}\int_0^{2\pi} |f(re^{it})|^p dt\right)^{\frac{1}{p}}, \quad 0<p<\infty;$$
$$M_\infty(r,f)=\sup_{0\leq t<2\pi}|f(re^{it})|.$$
We refer to \cite{Dur1} for the notation and results regarding Hardy spaces.

For $0 <\alpha<1 $, the Korenblum space $H^\infty_\alpha$ is the space of all functions $f\in H(\mathbb{D} )$ such that
$$\|f\|_{H^\infty_\alpha}=\sup_{z\in\mathbb{D}}(1-|z|^2)^\alpha|f(z)|<\infty.$$
Next, we present the definition of the weighted Korenblum space, which we introduced in reference \cite{Hu},
 for $0 < \alpha<1 $, the logarithmically weighted Korenblum spaces $H^\infty_{\alpha,\log}$ as the set of all $f\in H(\mathbb{D} )$
 such that
$$\|f\|_{H^\infty_{\alpha,\log}}\overset{def}{=} \sup_{z\in\mathbb{D}}(1-|z|^2)^\alpha\log\frac{2e^\frac{1}{\alpha}}{1-|z|^2}|f(z)|<\infty.$$
It is easily verified that $H^\infty \subsetneqq H^\infty_{\alpha,\log} \subsetneqq H^\infty_\alpha$.

For $0<\alpha<\infty$, the $\alpha$-Bloch space $\mathcal{B}^{\alpha}$  consists of those functions $f\in H(\mathbb{D})$ with
$$\|f\|_{\alpha*}=\sup_{z\in\mathbb{D}}(1-|z|^2)^\alpha|f'(z)|<\infty.$$
It is easy to check that $\|\ \|_\mathcal{^{\alpha*}}$ is a complete semi-norm on $\mathcal{B^{\alpha}}$, and $\mathcal{B^{\alpha}}$ can be made into a Banach space by introducing the norm$$\|f\|_\mathcal{B\alpha}=|f(0)|+\|f\|_{\alpha*}.$$

We can see that $\mathcal{B}^1$ is the classical Bloch space $\mathcal{B}$.  We mention \cite{Pom,Zhu} as general references for the classical Bloch space and  the $\alpha$-Bloch spaces.

For an analytic function $f(z) = \sum_{n=0}^{\infty} a_n z^n$ on the unit disk $\mathbb{D}$, the image $\mathcal{C}(f)$ is also analytic on $\mathbb{D}$ and admits several equivalent representations(See \cite{Sis3}). In particular, it can be expressed as:

\begin{align} \label{eq:cesaro-representations}
\mathcal{C}(f)(z) &= \sum_{n=0}^{\infty} \left( \frac{1}{n+1} \sum_{k=0}^{n} a_k \right) z^n \notag \\
&= \int_0^1 \frac{f(tz)}{1-tz} \, dt \notag \\
&= \frac{1}{z} \int_0^z \frac{f(\xi)}{1-\xi} \, d\xi.
\end{align}

By a change of variable in the integral representation, the Ces\`aro operator can be rewritten in terms of a family of weighted composition operators. Specifically, we have:
\begin{align} \label{eq:weighted-composition}
\mathcal{C}(f)(z) = \int_0^{\infty} S_t f(z) \, dt,
\end{align}
where
\begin{align}
S_t f(z) = w_t(z) f(\phi_t(z)), \quad
w_t(z) = \frac{e^{-t}}{1 - (1 - e^{-t})z}, \quad
\phi_t(z) = \frac{e^{-t}z}{1 - (1 - e^{-t})z}. \notag
\end{align}
Differentiating under the integral sign yields the derivative of $\mathcal{C}(f)$:
\begin{align} \label{eq:derivative}
\mathcal{C}(f)'(z) = \int_0^{\infty} \left[ \frac{e^{-t}(1 - e^{-t})}{(1 - (1 - e^{-t})z)^2} f(\phi_t(z))
+ \frac{e^{-2t}}{(1 - (1 - e^{-t})z)^3} f'(\phi_t(z)) \right] dt.
\end{align}

\section{Norm estimates of the Ces\`aro operator $\|\mathcal{C}\|_{H_\alpha^\infty\rightarrow H^\infty_\alpha}$}

In this section, we establish norm estimates for the Ces\`aro operator acting on the Korenblum space $H^\infty_\alpha$.

\begin{theorem}
	For  $0<\alpha\leq\frac{1}{2}$, the Ces\`aro operator $\mathcal{C}$ is bounded on Korenblum space $H_\alpha^\infty$, and its norm satisfies
	\begin{align*}
		\|\mathcal{C}\|_{H_\alpha^\infty\rightarrow H^\infty_\alpha}=\frac{1}{{\alpha}}.
	\end{align*}
\end{theorem}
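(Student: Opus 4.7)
The plan is to prove the two matching bounds $\|\mathcal{C}\|_{H^\infty_\alpha\to H^\infty_\alpha}\le 1/\alpha$ and $\|\mathcal{C}\|_{H^\infty_\alpha\to H^\infty_\alpha}\ge 1/\alpha$ separately. For the upper bound I will exploit the semigroup representation \eqref{eq:weighted-composition}, $\mathcal{C}(f)=\int_0^\infty S_t f\,dt$. Combined with the defining inequality $|f(\phi_t(z))|\le \|f\|_{H^\infty_\alpha}/(1-|\phi_t(z)|^2)^\alpha$, this yields
\begin{align*}
(1-|z|^2)^\alpha|\mathcal{C}(f)(z)|\le \|f\|_{H^\infty_\alpha}\int_0^\infty \frac{(1-|z|^2)^\alpha|w_t(z)|}{(1-|\phi_t(z)|^2)^\alpha}\,dt,
\end{align*}
and since $\int_0^\infty e^{-\alpha t}\,dt=1/\alpha$, the problem reduces to the uniform pointwise estimate
\begin{align*}
(1-|z|^2)^\alpha|w_t(z)|\le e^{-\alpha t}(1-|\phi_t(z)|^2)^\alpha,\qquad z\in\mathbb{D},\ t>0.
\end{align*}

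Setting $a=1-e^{-t}$, $r=|z|$ and $c=\cos\arg z$, I compute $|w_t(z)|=(1-a)/\sqrt{D(c)}$ and $1-|\phi_t(z)|^2=E(c)/D(c)$, where $D(c)=1-2arc+a^2r^2$ and $E(c)=1-2arc+(2a-1)r^2$. Both $D$ and $E$ are non-increasing in $c$, and since the exponents $\alpha-\tfrac12$ and $-\alpha$ are both non-positive when $\alpha\le\tfrac12$, the left-hand side above is maximized at $c=1$. The inequality then reduces to
\begin{align*}
(1-a)^{\beta-1}(1+r)\le (1-ar)^{\beta-2}\bigl(1+r(1-2a)\bigr),\qquad \beta=1/\alpha\ge 2.
\end{align*}
This is the main obstacle and precisely where the hypothesis $\alpha\le\tfrac12$ becomes essential. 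I plan to verify it by forming the ratio $\psi(r)$ of right to left, noting that $\psi(1)=1$, and showing that $\psi$ is non-increasing on $[0,1]$. Logarithmic differentiation together with the identity $\frac{1}{1+r}-\frac{1-2a}{1+r(1-2a)}=\frac{2a}{(1+r)(1+r(1-2a))}$ collapses $(\log\psi)'(r)$ into the manifestly non-positive expression $-\frac{a(\beta-2)}{1-ar}-\frac{2a}{(1+r)(1+r(1-2a))}$, settling the bound. Integrating in $t$ then delivers $\|\mathcal{C}\|_{H^\infty_\alpha\to H^\infty_\alpha}\le 1/\alpha$.

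For the matching lower bound I will test against $f(z)=(1-z^2)^{-\alpha}$. The algebraic identity $|1-z^2|^2-(1-|z|^2)^2=4(\operatorname{Im} z)^2\ge 0$ gives $\|f\|_{H^\infty_\alpha}=1$, with equality along the positive real axis. Applying \eqref{eq:cesaro-representations} at $z=r\in(0,1)$ and substituting $u=tr$ yields
\begin{align*}
(1-r^2)^\alpha\,\mathcal{C}(f)(r)=\frac{(1-r^2)^\alpha}{r}\int_0^r\frac{du}{(1-u)^{\alpha+1}(1+u)^\alpha}.
\end{align*}
Since the integrand is asymptotic to $2^{-\alpha}(1-u)^{-\alpha-1}$ as $u\to 1^-$, the integral grows like $(\alpha 2^\alpha)^{-1}(1-r)^{-\alpha}$, and the right-hand side tends to $1/\alpha$ as $r\to 1^-$. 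Combined with the upper bound this forces $\|\mathcal{C}\|_{H^\infty_\alpha\to H^\infty_\alpha}=1/\alpha$. Apart from the pointwise semigroup estimate, every step is a routine computation or elementary limit; that estimate carries all the content that makes the restriction $\alpha\le\tfrac12$ natural.
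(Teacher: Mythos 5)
Your proof is correct, and its skeleton coincides with the paper's: both establish the lower bound by testing on $f_\alpha(z)=(1-z^2)^{-\alpha}$ (which has norm $1$) and letting $z=r\to1^-$, and both reduce the upper bound to the single semigroup decay estimate $(1-|z|^2)^\alpha\,|w_t(z)|\le e^{-\alpha t}\,(1-|\phi_t(z)|^2)^\alpha$, i.e.\ $\|S_t\|_{H^\infty_\alpha\to H^\infty_\alpha}\le e^{-\alpha t}$, followed by $\int_0^\infty e^{-\alpha t}\,dt=1/\alpha$. Where you genuinely diverge is in how that estimate is proved. The paper observes that $|w_t(z)|=e^{-t/2}|\phi_t'(z)|^{1/2}$, splits $|\phi_t'|^{1/2}=|\phi_t'|^{1/2-\alpha}|\phi_t'|^{\alpha}$, bounds the first factor by $e^{t(1/2-\alpha)}$ using $|\phi_t'(z)|\le e^t$ (this is where $\alpha\le\tfrac12$ enters, as the exponent $\tfrac12-\alpha$ must be nonnegative), and absorbs the second factor via the Schwarz--Pick inequality $|\phi_t'(z)|(1-|z|^2)\le 1-|\phi_t(z)|^2$; the whole verification takes three lines. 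You instead prove the same inequality by brute force: a monotonicity argument in $c=\cos\arg z$ reduces it to the positive real axis (with $\alpha\le\tfrac12$ entering through the sign of the exponent $\alpha-\tfrac12$ of $D$), and a logarithmic-derivative computation shows the resulting one-variable ratio $\psi$ is non-increasing with $\psi(1)=1$. I checked your algebra --- the formulas for $D$, $E$, the reduction to $(1-a)^{\beta-1}(1+r)\le(1-ar)^{\beta-2}(1+r(1-2a))$ with $\beta=1/\alpha$, the identity collapsing $(\log\psi)'$, and the positivity of $1+r(1-2a)$ --- and it all holds. Your route is entirely elementary and makes transparent that the real axis is extremal, at the cost of length; the paper's Schwarz--Pick factorization is slicker and explains more conceptually why the threshold $\alpha=\tfrac12$ appears. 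Your lower-bound computation via the primitive $\frac1z\int_0^z f(\xi)(1-\xi)^{-1}\,d\xi$ and an asymptotic analysis near $u=1$ is also sound and, if anything, cleaner than the paper's passage of the limit $r\to1^-$ through the $t$-integral, which the paper does not justify by dominated convergence.
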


\begin{proof}

First, we consider the lower bound of $\|\mathcal{C}\|_{H_\alpha^\infty \to H_\alpha^\infty}$. Let $0 < \alpha < 1$ and $z \in \mathbb{D}$. Define
\[
f_\alpha(z) = \frac{1}{(1 - z^2)^\alpha}.
\]
On one hand, we have the estimate
\[
\|f_\alpha\|_{H_\alpha^\infty} = \sup_{z \in \mathbb{D}} \frac{(1 - |z|^2)^\alpha}{|1 - z^2|^\alpha} \leq \sup_{z \in \mathbb{D}} \frac{(1 - |z|^2)^\alpha}{(1 - |z|^2)^\alpha} = 1.
\]
On the other hand, for $r \in (0,1)$, it holds that
\[
\lim_{r \to 1^-} |f_\alpha(r)|(1 - r^2)^\alpha = 1,
\]
and we obtain $\|f_\alpha\|_{H_\alpha^\infty} = 1$.

Now,
\begin{align*}
\|\mathcal{C}\|_{H_\alpha^\infty \to H_\alpha^\infty} &\geq \frac{\|\mathcal{C}(f_\alpha)\|_{H_\alpha^\infty}}{\|f_\alpha\|_{H_\alpha^\infty}} \\
&= \sup_{z \in \mathbb{D}} (1 - |z|^2)^{\alpha} \left| \int_0^{\infty} S_t f_\alpha(z)  dt \right| \\
&= \sup_{z \in \mathbb{D}} (1 - |z|^2)^{\alpha} \left| \int_{0}^{\infty} \frac{e^{-t}}{1 - (1 - e^{-t})z} \cdot \frac{1}{(1 - (\phi_{t}(z))^2)^{\alpha}}  dt \right| \\
&\geq \sup_{0 \leq r < 1} \int_{0}^{\infty} \frac{(1 + r)^\alpha e^{-t} \left(1 - (1 - e^{-t})r \right)^{2\alpha - 1}}{ \left(1 - (1 - 2e^{-t})r\right)^\alpha} dt.
\end{align*}
Letting $r \to 1^{-}$, we obtain
\[
\lim_{r \to 1^-} \int_{0}^{\infty} \frac{(1 + r)^\alpha e^{-t} \left(1 - (1 - e^{-t})r \right)^{2\alpha - 1}}{ \left(1 - (1 - 2e^{-t})r\right)^\alpha} dt = \int_{0}^{\infty} e^{-\alpha t} dt = \frac{1}{\alpha}.
\]

 Next, we derive the upper bound.

Let $f\in H_\alpha^\infty$ with $0<\alpha\leq \frac12$. Using the estimate $|\phi_{t}^{\prime}(z)|=|\frac{e^{-t}}{(1-(1-e^{-t})z)^2}| \leq e^t$
and the Schwarz-Pick lemma, we obtain that
\begin{align*}
	\|S_{t}(f)\|_{H_{\alpha}^{\infty}} &= \sup_{z \in \mathbb{D}} |S_{t}(f)(z)| (1-|z|^{2})^{\alpha} \\
	& = \sup_{z \in \mathbb{D}} \sqrt{e^{-t}} |\phi_{t}^{\prime}(z)|^{1/2} |f(\phi_{t}(z))| (1-|z|^{2})^{\alpha} \\
	&= \sup_{z \in \mathbb{D}}  \sqrt{e^{-t}} |\phi_{t}^{\prime}(z)|^{1/2-\alpha} |\phi_{t}^{\prime}(z)|^{\alpha} |f(\phi_{t}(z))| (1-|z|^{2})^{\alpha} \\
	& \leq  \sqrt{e^{-t}} (e^t)^{1/2-\alpha} \sup_{z \in \mathbb{D}} |f(\phi_{t}(z))| (1-|z|^{2})^{\alpha} |\phi_{t}^{\prime}(z)|^{\alpha} \\
	& \leq e^{-\alpha t} \sup_{z \in \mathbb{D}} |f(\phi_{t}(z))| (1-|\phi_{t}(z)|^{2})^{\alpha}\\
	&\leq e^{-\alpha t} \|f\|_{H_{\alpha}^{\infty}}.
\end{align*}

Then,
\begin{align*}
	\|\mathcal{C}(f)\|_{H_\alpha^\infty}&=\sup_{z\in\mathbb{D}}(1-|z|^2)^{\alpha}\left|\int_{0}^{\infty}S_{t}(f)(z) dt\right|\\
	&\leq\int_{0}^{\infty}\sup_{z\in\mathbb{D}}(1-|z|^2)^{\alpha}|S_{t}(f)(z) |dt\\
	&\leq\int_{0}^{\infty}	\|S_{t}(f)\|_{H_{\alpha}^{\infty}}dt\\
	&\leq\int_{0}^{\infty}e^{-\alpha t} \|f\|_{H_{\alpha}^{\infty}}dt\\
	&=\frac{1}{\alpha}\|f\|_{H_{\alpha}^{\infty}}.
\end{align*}	

Therefore, for $0<\alpha\leq\frac{1}{2}$,
\[
\|\mathcal{C}\|_{H_\alpha^\infty\rightarrow H^\infty_\alpha}=\frac{1}{\alpha}.
\]

This completes the proof of the theorem.

\end{proof}

\section{Norm estimates of the Ces\`aro operator $\|\mathcal{C}\|_{H^\infty_{\alpha,\log} \rightarrow H^\infty_\alpha}$}

Since $H^\infty_{\alpha,\log} \subsetneqq H^\infty_\alpha$, for $0<\alpha<1$, and considering that the norm of the Ces\`aro operator $\mathcal{C}$  on the Korenblum space is bounded in this range, we can conclude that $\mathcal{C}$ is a bound operator acting from $H^\infty_{\alpha,\log}$ into $H^\infty_\alpha$. In this section,  we aim to derive norm estimates for the Ces\`aro operator as it acts from $H^\infty_{\alpha,\log}$ into $H^\infty_\alpha$ for $0 <\alpha<1$.

\begin{theorem}\label{Th3.1}
	For $0<\alpha<1$, then
	$$  \|\mathcal{C}\|_{H^\infty_{\alpha,\log} \rightarrow H^\infty_\alpha}=\sup_{0\leq r<1}\int_{0}^{\infty}\frac{(1+r)^\alpha e^{-t}\left(1-(1-e^{-t})r\right)^{2\alpha-1}}{ \left(1-(1-2e^{-t})r\right)^\alpha\log\frac{2e^\frac{1}{\alpha}}{1-\left(\frac{re^{-t}}{1-(1-e^{-t})r}\right)^2}}dt,$$
and
$$  \|\mathcal{C}\|_{H^\infty_{\alpha,\log} \rightarrow H^\infty_\alpha}\geq\frac{1}{\frac{1}{\alpha}+\log2}	.$$
\end{theorem}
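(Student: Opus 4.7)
The plan is to establish the equality through matching upper and lower bounds, both built from the semigroup decomposition $\mathcal{C}(f)(z)=\int_{0}^{\infty}S_{t}f(z)\,dt$ used in the previous section, and then to derive the explicit lower bound by specialising the supremum to $r=0$. Throughout, let $I$ denote the supremum on the right-hand side of the stated formula.

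For the upper bound I take $f\in H^{\infty}_{\alpha,\log}$ and apply the defining pointwise estimate $|f(w)|\leq \|f\|_{H^{\infty}_{\alpha,\log}}/[(1-|w|^{2})^{\alpha}\log\tfrac{2e^{1/\alpha}}{1-|w|^{2}}]$ with $w=\phi_{t}(z)$. To reduce the supremum over $z\in\mathbb{D}$ to one over $r\in[0,1)$, the elementary bound $|1-(1-e^{-t})z|\geq 1-(1-e^{-t})|z|$ gives simultaneously $|w_{t}(z)|\leq w_{t}(|z|)$ and $|\phi_{t}(z)|\leq \phi_{t}(|z|)$; since $F(u)=u^{\alpha}\log(2e^{1/\alpha}/u)$ is increasing on $(0,2]$ (a direct computation of $F'$), the factor $1/[(1-|\phi_{t}(z)|^{2})^{\alpha}\log\tfrac{2e^{1/\alpha}}{1-|\phi_{t}(z)|^{2}}]$ is dominated by its value at $|\phi_{t}(z)|=\phi_{t}(|z|)$. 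Combining these with the factorisations $1-\phi_{t}(r)=(1-r)/[1-(1-e^{-t})r]$ and $1+\phi_{t}(r)=[1-(1-2e^{-t})r]/[1-(1-e^{-t})r]$ to simplify $(1-\phi_{t}(r)^{2})^{\alpha}$ produces exactly the claimed integrand, and integrating in $t$ gives $\|\mathcal{C}(f)\|_{H^{\infty}_{\alpha}}\leq I\cdot \|f\|_{H^{\infty}_{\alpha,\log}}$.

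For the matching lower bound I introduce the extremal test function $h_{\alpha}(z)=[(1-z^{2})^{\alpha}\log(2e^{1/\alpha}/(1-z^{2}))]^{-1}$, where the principal branches are well defined because $\operatorname{Re}(1-z^{2})>0$ on $\mathbb{D}$. Verifying $\|h_{\alpha}\|_{H^{\infty}_{\alpha,\log}}=1$ requires combining $|1-z^{2}|\geq 1-|z|^{2}$, the monotonicity of $F$ on $(0,2]$, and the inequality $|\log w|\geq \log|w|$ applied to $w=2e^{1/\alpha}/(1-z^{2})$, whose modulus exceeds $e^{1/\alpha}$; this shows the defining quantity is at most $1$, with equality at $z=0$. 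Since $\phi_{t}(r)\in(0,r]$ is real for $r\in[0,1)$, the algebraic reduction from the upper bound runs now as an equality and yields $(1-r^{2})^{\alpha}\mathcal{C}(h_{\alpha})(r)$ equal to the integrand, so $\|\mathcal{C}\|_{H^{\infty}_{\alpha,\log}\to H^{\infty}_{\alpha}}\geq \|\mathcal{C}(h_{\alpha})\|_{H^{\infty}_{\alpha}}\geq I$. The explicit bound $I\geq 1/(1/\alpha+\log 2)$ then follows immediately by evaluating the integrand at $r=0$: every factor $1-(1-e^{-t})r$ reduces to $1$, the logarithm reduces to $\log(2e^{1/\alpha})=1/\alpha+\log 2$, and the integrand collapses to $e^{-t}/(1/\alpha+\log 2)$, whose integral over $[0,\infty)$ is $1/(1/\alpha+\log 2)$.

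The main obstacle is the verification $\|h_{\alpha}\|_{H^{\infty}_{\alpha,\log}}=1$, since the complex logarithm and the non-radial dependence on $\arg(1-z^{2})$ prevent a direct radial comparison; the combined use of $|1-z^{2}|\geq 1-|z|^{2}$, monotonicity of $F$ on $(0,2]$, and the modulus inequality for the complex logarithm is essential to close the gap.
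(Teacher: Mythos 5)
Your proposal is correct and follows essentially the same route as the paper: the same extremal function $h_\alpha$, the same monotonicity of $x^{\alpha}\log(2e^{1/\alpha}/x)$ on $(0,2]$, the same reduction of the supremum to real $r$ via $|w_t(z)|\leq w_t(|z|)$ and $|\phi_t(z)|\leq\phi_t(|z|)$, and the same evaluation at $r=0$ for the explicit bound. Your verification of $\|h_\alpha\|_{H^\infty_{\alpha,\log}}=1$, using $|\log w|\geq\log|w|$ to handle the complex logarithm, is in fact slightly more careful than the paper's, which passes over that point silently.
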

\begin{proof}
	Let $0<\alpha<1$ and $z \in \mathbb{D}$. Define$$f_\alpha(z)=\frac{1}{(1-z^2)^\alpha\log\frac{2e^\frac{1}{\alpha}}{1-z^2}}.$$
	By a simple calculation, we see that $g(x)=x^\alpha\log\frac{2e^\frac{1}{\alpha}}{x}$ is monotonically increasing in $(0,2)$. Since $0\leq |1-z^2|\leq2$, we obtain that
	\begin{align*}
		\|f_\alpha\|_{H^\infty_{\alpha,\log} }
		&=\sup_{z\in\mathbb{D}}(1-|z|^2)^\alpha\log\frac{2e^\frac{1}{\alpha}}{1-|z|^2}\left|\frac{1}{(1-z^2)^\alpha\log\frac{2e^\frac{1}{\alpha}}{1-z^2}}\right|\\
		&\leq\sup_{z\in\mathbb{D}}(1-|z|^2)^\alpha\log\frac{2e^\frac{1}{\alpha}}{1-|z|^2}\frac{1}{(1-|z|^2)^\alpha\log\frac{2e^\frac{1}{\alpha}}{1-|z|^2}}\\
		&=\sup_{0\leq r<1}(1-r^2)^\alpha\log\frac{2e^\frac{1}{\alpha}}{1-r^2}\frac{1}{(1-r^2)^\alpha\log\frac{2e^\frac{1}{\alpha}}{1-r^2}}\\
		&=1.
	\end{align*}
	
	Since
 $$\lim\limits_{r\rightarrow1}|f_\alpha(z)|(1-r^2)^\alpha\log\frac{2e^\frac{1}{\alpha}}{1-r^2}=1,$$  we conclude $\|f_\alpha\|_{H^\infty_{\alpha,\log} }=1$.
	
	The weighted composition operator $S_t$ applied to a function $f_\alpha$ can be written as
	\begin{align*}
		S_tf_\alpha(z)&=\frac{e^{-t}}{1-(1-e^{-t})z}f(\frac{e^{-t}z}{1-(1-e^{-t})z})\\
		&=\frac{e^{-t}}{1-(1-e^{-t})z}\frac{1}{(1-(\frac{e^{-t}z}{1-(1-e^{-t})z})^2)^\alpha\log\frac{2e^\frac{1}{\alpha}}{1-(\frac{e^{-t}z}{1-(1-e^{-t})z})^2}}\\
		&=\frac{ e^{-t}\left(1-(1-e^{-t})z\right)^{2\alpha-1}}{(1-z)^{\alpha} \left(1-(1-2e^{-t})z\right)^\alpha\log\frac{2e^\frac{1}{\alpha}}{1-\left(\frac{e^{-t}z}{1-(1-e^{-t})z}\right)^2}}
	\end{align*}
	Since $\mathcal{C} f_\alpha(z)=\int_0^{\infty} S_t f_\alpha(z) dt$ and $\|f_\alpha\|_{H^\infty_{\alpha,\log} }=1,$ we have
	\begin{align}\label{5.1}
		\|\mathcal{C}\|_{H^\infty_{\alpha,\log} \rightarrow H^\infty_\alpha}&\geq\|\mathcal{C} f_\alpha\|_{H^\infty_\alpha}=\sup_{z\in\mathbb{D}}(1-|z|^2)^\alpha|\mathcal{C} f_\alpha(z)|\notag\\
		&=\sup_{z\in\mathbb{D}}\int_{0}^{\infty}(1-|z|^2)^\alpha\left|\frac{ e^{-t}\left(1-(1-e^{-t})z\right)^{2\alpha-1}}{(1-z)^{\alpha} \left(1-(1-2e^{-t})z\right)^\alpha\log\frac{2e^\frac{1}{\alpha}}{1-\left(\frac{e^{-t}z}{1-(1-e^{-t})z}\right)^2}}\right|dt\notag\\
		&\geq\sup_{0\leq r<1}\int_{0}^{1}(1-r^2)^\alpha\frac{ e^{-t}\left(1-(1-e^{-t})r\right)^{2\alpha-1}}{(1-r)^{\alpha} \left(1-(1-2e^{-t})r\right)^\alpha\log\frac{2e^\frac{1}{\alpha}}{1-\left(\frac{re^{-t}}{1-(1-e^{-t})r}\right)^2}}dt\notag\\
		&=\sup_{0\leq r<1}\int_{0}^{1}\frac{(1+r)^\alpha e^{-t}\left(1-(1-e^{-t})r\right)^{2\alpha-1}}{\left(1-(1-2e^{-t})r\right)^\alpha\log\frac{2e^\frac{1}{\alpha}}{1-\left(\frac{re^{-t}}{1-(1-e^{-t})r}\right)^2}}dt.
	\end{align}
	On the other hand, we have that$$|S_tf(z)|=|w_t(z)f(\phi_t(z))|$$
	\begin{align}\label{5.2}
		&= \lvert w_t(z)\rvert\frac{1}{(1-|\phi_t(z)|^2)^\alpha\log\frac{2e^\frac{1}{\alpha}}{1-|\phi_t(z)|^2}}(1-|\phi_t(z)|^2)^\alpha\log\frac{2e^\frac{1}{\alpha}}{1-|\phi_t(z)|^2}|f(\phi_t(z))|\notag\\
		&\leq \lvert w_t(z)\rvert\frac{1}{(1-|\phi_t(z)|^2)^\alpha\log\frac{2e^\frac{1}{\alpha}}{1-|\phi_t(z)|^2}}\|f\|_{H^\infty_{\alpha,\log} },
	\end{align}
	since $|w_t(z)|=\frac{e^{-t}}{|1-(1-e^{-t})z|}\leq\frac{e^{-t}}{1-(1-e^{-t})|z|}$, $|\phi_t(z)|=\frac{|e^{-t}z|}{|1-(1-e^{-t})z|}\leq\frac{e^{-t}|z|}{1-(1-e^{-t})|z|}$ and the monotonicity of $g(x)$, we obtain
 $$|S_tf(z)|\leq w_t(|z|)\frac{1}{(1-\phi_t(|z|)^2)^\alpha\log\frac{2e^\frac{1}{\alpha}}{1-\phi_t(|z|)^2}}\|f\|_{H^\infty_{\alpha,\log}}.$$
	Therefore, from inequality (\ref{5.2}) we deduce
	\begin{align}\label{eq2.5}
		\|\mathcal{C}f\|_{H^\infty_\alpha}&=\sup_{z\in\mathbb{D}}(1-|z|^2)^\alpha|\int_{0}^{\infty}S_tf(z)dt|\notag\\
		&\leq\sup_{0\leq r<1}(1-r^2)^\alpha\int_{0}^{\infty}\frac{e^{-t}}{1-(1-e^{-t})r}\frac{1}{(1-(\frac{e^{-t}r}{1-(1-e^{-t})r})^2)^\alpha\log\frac{2e^\frac{1}{\alpha}}{1-(\frac{e^{-t}r}{1-(1-e^{-t})r})^2}}dt\|f\|_{H^\infty_{\alpha,\log}}\notag\\
		&=\sup_{0\leq r<1}\int_{0}^{\infty}\frac{(1+r)^\alpha e^{-t}\left(1-(1-e^{-t})r\right)^{2\alpha-1}}{\left(1-(1-2e^{-t})r\right)^\alpha\log\frac{2e^\frac{1}{\alpha}}{1-\left(\frac{re^{-t}}{1-(1-e^{-t})r}\right)^2}}dt\|f\|_{H^\infty_{\alpha,\log}}.
	\end{align}
	Hence,
	\begin{align*}
	\|\mathcal{C}\|_{H^\infty_{\alpha,\log} \rightarrow H^\infty_\alpha}=\sup_{0\leq r<1}\int_{0}^{\infty}\frac{(1+r)^\alpha e^{-t}\left(1-(1-e^{-t})r\right)^{2\alpha-1}}{\left(1-(1-2e^{-t})r\right)^\alpha\log\frac{2e^\frac{1}{\alpha}}{1-\left(\frac{re^{-t}}{1-(1-e^{-t})r}\right)^2}}dt<\infty.
	\end{align*}

Since
\begin{align*}
		&\sup_{0\leq r<1}\int_{0}^{\infty}\frac{(1+r)^\alpha e^{-t}\left(1-(1-e^{-t})r\right)^{2\alpha-1}}{\left(1-(1-2e^{-t})r\right)^\alpha
\log\frac{2e^\frac{1}{\alpha}}{1-\left(\frac{re^{-t}}{1-(1-e^{-t})r}\right)^2}}dt\\
&\geq\sup_{r=0}\int_{0}^{\infty}\frac{(1+r)^\alpha e^{-t}\left(1-(1-e^{-t})r\right)^{2\alpha-1}}{\left(1-(1-2e^{-t})r\right)^\alpha
\log\frac{2e^\frac{1}{\alpha}}{1-\left(\frac{re^{-t}}{1-(1-e^{-t})r}\right)^2}}dt=\frac{1}{\log2e^\frac{1}{\alpha}}\int_{0}^{\infty}e^{-t}dt
	\end{align*}

Thus,
$$\|\mathcal{C}\|_{H^\infty_{\alpha,\log} \rightarrow H^\infty_\alpha}\geq \frac{1}{\log2 +\frac{1}{\alpha}}.$$
	This completes the proof.
\end{proof}

\section{Norm estimates of the Ces\`aro operator $\|\mathcal{C}\|_{H^\infty_{\alpha,\log} \rightarrow H^\infty_{\alpha,\log}}$}
In this section,  we calculate the norm of the Ces\`aro operator $\mathcal{C}$ acting on   the logarithmically weighted Korenblum space $H^\infty_{\alpha,\log}$.

\begin{theorem}
	For $0<\alpha<1$, then the Ces\`aro operator  $\mathcal{C}$ is bounded on the logarithmically weighted Korenblum space  $H^\infty_{\alpha,\log}$. Moreover, the norm of  $\mathcal{C}$ satisfies the following equations:
$$\|\mathcal{C}\|_{H^\infty_{\alpha,\log} \rightarrow H^\infty_{\alpha,\log}}=\sup_{0\leq r<1}\int_{0}^{\infty}\frac{(1+r)^{\alpha}e^{-t}\left(1-(1-e^{-t})r\right)^{2\alpha-1}\log\frac{2e^\frac{1}{\alpha}}{1-r^2}}{\left(1-(1-2e^{-t})r\right)^{\alpha}\log\frac{2e^\frac{1}{\alpha}}{1-\left(\frac{e^{-t}r}{1-(1-e^{-t})r}\right)^2}}dt.$$ and $$\|\mathcal{C}\|_{H^\infty_{\alpha,\log} \rightarrow {H^\infty_{\alpha,\log} }}
	\geq\frac{1}{\alpha}.$$
\end{theorem}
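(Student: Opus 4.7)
The plan is to parallel Theorem~\ref{Th3.1} almost verbatim, with the only structural change being that the target norm now carries an extra logarithmic factor $\log\frac{2e^{1/\alpha}}{1-|z|^2}$. This factor survives outside the $t$-integral in both the upper and lower bounds, and combines with the denominator log to produce the ratio of logarithms appearing in the claimed integral. As before, I would exploit the identities
\[
w_t(r)=\frac{e^{-t}}{1-(1-e^{-t})r},\qquad 1-\phi_t(r)^2=\frac{(1-r)(1-(1-2e^{-t})r)}{(1-(1-e^{-t})r)^2},
\]
so that the product $(1-r^2)^\alpha\, w_t(r)\, (1-\phi_t(r)^2)^{-\alpha}$ simplifies to exactly $\frac{(1+r)^\alpha e^{-t}(1-(1-e^{-t})r)^{2\alpha-1}}{(1-(1-2e^{-t})r)^\alpha}$.

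For the upper bound, I would recycle the pointwise estimate derived in the proof of Theorem~\ref{Th3.1}:
\[
|S_t f(z)|\leq w_t(|z|)\,\frac{\|f\|_{H^\infty_{\alpha,\log}}}{(1-\phi_t(|z|)^2)^\alpha\log\frac{2e^{1/\alpha}}{1-\phi_t(|z|)^2}},
\]
which rested on the monotonicity of $g(x)=x^\alpha\log\frac{2e^{1/\alpha}}{x}$ on $(0,2)$. Multiplying by $(1-|z|^2)^\alpha\log\frac{2e^{1/\alpha}}{1-|z|^2}$, applying Fubini to the nonnegative integrand, and taking the supremum over $z=r\in[0,1)$ yields the stated integral as an upper bound. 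For the matching lower bound, I would use the same test function $f_\alpha(z)=\bigl[(1-z^2)^\alpha\log\frac{2e^{1/\alpha}}{1-z^2}\bigr]^{-1}$ (already shown to be a unit vector in $H^\infty_{\alpha,\log}$). Evaluating $S_t f_\alpha$ on the positive real axis, the integrand becomes nonnegative and the calculation above runs in reverse, so that
\[
(1-r^2)^\alpha\log\tfrac{2e^{1/\alpha}}{1-r^2}\,\mathcal{C}(f_\alpha)(r)=\int_0^\infty \frac{(1+r)^\alpha e^{-t}(1-(1-e^{-t})r)^{2\alpha-1}\log\frac{2e^{1/\alpha}}{1-r^2}}{(1-(1-2e^{-t})r)^\alpha\log\frac{2e^{1/\alpha}}{1-\phi_t(r)^2}}\,dt.
\]
Taking the supremum over $r\in[0,1)$ produces the reverse inequality, closing the identity.

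For the concluding bound $\|\mathcal{C}\|_{H^\infty_{\alpha,\log}\to H^\infty_{\alpha,\log}}\geq 1/\alpha$, I would analyze the integrand as $r\to 1^-$. Using $1-r^2\sim 2(1-r)$ and the identity above, $1-\phi_t(r)^2\sim 2e^{t}(1-r)$, so the ratio of logarithms tends to $1$ for each fixed $t$. The algebraic factor collapses to $\frac{2^\alpha e^{-2\alpha t}}{2^\alpha e^{-\alpha t}}=e^{-\alpha t}$. Since the integrand is nonnegative, Fatou's lemma gives
\[
\sup_{0\leq r<1}\int_0^\infty(\text{integrand})\,dt\ \geq\ \liminf_{r\to 1^-}\int_0^\infty(\text{integrand})\,dt\ \geq\ \int_0^\infty e^{-\alpha t}\,dt=\frac{1}{\alpha}.
\]
The only real obstacle is the bookkeeping of the pointwise limit of the log-ratio as $r\to 1^-$; once this asymptotic is confirmed, Fatou handles the rest with no need for dominated convergence.
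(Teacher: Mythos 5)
Your proposal is correct and follows essentially the same route as the paper: the same unit-norm test function $f_\alpha$, the same pointwise bound $|S_t f(z)|\leq w_t(|z|)\,(1-\phi_t(|z|)^2)^{-\alpha}\bigl(\log\frac{2e^{1/\alpha}}{1-\phi_t(|z|)^2}\bigr)^{-1}\|f\|_{H^\infty_{\alpha,\log}}$ recycled from the previous theorem, and the same $r\to 1^-$ asymptotics for the final bound. Your use of Fatou's lemma to pass the limit inside the integral is in fact a slightly cleaner justification than the paper's unqualified interchange of limit and integral.
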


\begin{proof} Let $$f_\alpha(z)=\frac{1}{(1-z^2)^\alpha\log\frac{2e^\frac{1}{\alpha}}{1-z^2}}.$$
From the proof of Theorem 4.1, we know that  $\|f_\alpha\|_{H^\infty_{\alpha,\log} }=1$ and
$$
	S_tf_\alpha(z)=\frac{ e^{-t}\left(1-(1-e^{-t})z\right)^{2\alpha-1}}{(1-z)^{\alpha} \left(1-(1-2e^{-t})z\right)^\alpha\log\frac{2e^\frac{1}{\alpha}}{1-\left(\frac{e^{-t}z}{1-(1-e^{-t})z}\right)^2}}.
$$
Since $\mathcal{C} f_\alpha(z)=\displaystyle\int_0^{\infty} S_t f_\alpha(z) dt$, we deduce
\begin{align*}
	\|\mathcal{C}\|_{H^\infty_{\alpha,\log} \rightarrow H^\infty_{\alpha,\log}}&\geq\|\mathcal{C}f_\alpha\|_{H^\infty_{\alpha,\log}}\\
	&=\sup_{z\in\mathbb{D}}(1-|z|^2)^\alpha\log\frac{2e^\frac{1}{\alpha}}{1-|z|^2}|\mathcal{C}f_\alpha(z)|\\
	&\geq\sup_{0\leq r<1}(1-r^2)^{\alpha}\log\frac{2e^\frac{1}{\alpha}}{1-r^2}\mathcal{C}f_\alpha(r)\\
	&=\sup_{0\leq r<1}\int_{0}^{\infty}\frac{(1+r)^{\alpha}e^{-t}\left(1-(1-e^{-t})r\right)^{2\alpha-1}\log\frac{2e^\frac{1}{\alpha}}{1-r^2}}{\left(1-(1-2e^{-t})r\right)^{\alpha}\log\frac{2e^\frac{1}{\alpha}}{1-(\frac{e^{-t}r}{1-(1-e^{-t})r})^2}} dt.\\
\end{align*}
On the other hand, using the estimate
  $$|S_tf(z)|\leq w_t(|z|)\frac{1}{(1-\phi_t(|z|)^2)^\alpha\log\frac{2e^\frac{1}{\alpha}}{1-\phi_t(|z|)^2}}\|f\|_{H^\infty_{\alpha,\log}},$$
we obtain that
\begin{align*}
	\|\mathcal{C}f\|_{H^\infty_{\alpha,\log}}&=\sup_{z\in\mathbb{D}}(1-|z|^2)^{\alpha}\log\frac{2e^\frac{1}{\alpha}}{1-|z|^2}|\int_{0}^{\infty}S_tf(z)dt|\\
	&\leq\sup_{z\in\mathbb{D}}(1-|z|^2)^{\alpha}\log\frac{2e^\frac{1}{\alpha}}{1-|z|^2}|\int_{0}^{\infty}w_t(|z|)\frac{1}{(1-\phi_t(|z|)^2)^\alpha\log\frac{2e^\frac{1}{\alpha}}{1-\phi_t(|z|)^2}}dt\|f\|_{H^\infty_{\alpha,\log}}\\
	&\leq\sup_{0\leq r<1}\int_{0}^{\infty}\frac{(1+r)^{\alpha}e^{-t}\left(1-(1-e^{-t})r\right)^{2\alpha-1}\log\frac{2e^\frac{1}{\alpha}}{1-r^2}}{\left(1-(1-2e^{-t})r\right)^{\alpha}\log\frac{2e^\frac{1}{\alpha}}{1-(\frac{e^{-t}r}{1-(1-e^{-t})r})^2}} dt\|f\|_{H^\infty_{\alpha,\log}}.\\
\end{align*}
Therefore,
$$\|\mathcal{C}\|_{H^\infty_{\alpha,\log} \rightarrow H^\infty_{\alpha,\log}}=\sup_{0\leq r<1}\int_{0}^{\infty}\frac{(1+r)^{\alpha}e^{-t}\left(1-(1-e^{-t})r\right)^{2\alpha-1}\log\frac{2e^\frac{1}{\alpha}}{1-r^2}}{\left(1-(1-2e^{-t})r\right)^{\alpha}\log\frac{2e^\frac{1}{\alpha}}{1-\left(\frac{e^{-t}r}{1-(1-e^{-t})r}\right)^2}}dt.$$
Now, observe that  $$\lim_{r\rightarrow1^-}\frac{\log\frac{2e^\frac{1}{\alpha}}{1-r^2}}{\log\frac{2e^\frac{1}{\alpha}}{1-\left(\frac{e^{-t}r}{1-(1-e^{-t})r}\right)^2}}=1,$$
Hence,
\begin{align*}
	\|\mathcal{C}\|_{H^\infty_{\alpha,\log} \rightarrow {H^\infty_{\alpha,\log} }}
	&\geq\lim_{r\rightarrow1^-}\int_{0}^{\infty}\frac{(1+r)^{\alpha}e^{-t}\left(1-(1-e^{-t})r\right)^{2\alpha-1}\log\frac{2e^\frac{1}{\alpha}}{1-r^2}}{\left(1-(1-2e^{-t})r\right)^{\alpha}\log\frac{2e^\frac{1}{\alpha}}{1-\left(\frac{e^{-t}r}{1-(1-e^{-t})r}\right)^2}}dt\\
	&=\int_{0}^{\infty}e^{-\alpha t} dt\\
	&=\frac{1}{\alpha}
\end{align*}

\end{proof}

\section{Norm estimates of the Ces\`aro operator$ \|\mathcal{C}\|_{\mathcal{B}^\alpha\rightarrow \mathcal{B}^\alpha}$}

   From \cite{Xia}, we know that the Ces\`aro operator $\mathcal{C}$ is bounded on the $\alpha$-Bloch space $\mathcal{B}^\alpha$ if and only if $1 < \alpha < \infty$. Here, we provide an upper bound and a lower bound for the norm of the Ces\`aro operator when $\alpha > 1$.

 \begin{lemma}
 \cite[Lemma 5.1]{Hu}
 Let $f\in\mathcal{B}^\alpha$, then
  \begin{align*}
	|f(z)|\leq
	\begin{cases}
		&\frac{(1-|z|)^{1-\alpha}-1}{\alpha-1}\|f\|_{\alpha*}+|f(0)|, if  \alpha\not=1,\\
		&\log\frac{1}{1-|z|}\|f\|_{\alpha*}+|f(0)|,  if  \alpha =1.\\
	\end{cases}
\end{align*}

 \end{lemma}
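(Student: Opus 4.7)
The lemma is a standard pointwise growth estimate for $\alpha$-Bloch functions, and I would prove it by integrating the derivative along a radial segment. The plan is as follows.

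First I would fix $z \in \mathbb{D}$ and connect $f(z)$ to $f(0)$ via the line segment from $0$ to $z$. Writing $\zeta = tz$ with $t \in [0,1]$ and using the fundamental theorem of calculus, I get
\[
f(z) - f(0) = \int_0^1 f'(tz)\, z \, dt,
\]
so the triangle inequality gives $|f(z)| \le |f(0)| + \int_0^1 |f'(tz)|\,|z|\,dt$.

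Next I would bound $|f'(tz)|$ using the Bloch seminorm. By definition, $|f'(w)|(1-|w|^2)^\alpha \le \|f\|_{\alpha*}$ for every $w \in \mathbb{D}$. Applying this at $w = tz$ and using the elementary inequality $1 - t^2|z|^2 = (1-t|z|)(1+t|z|) \ge 1 - t|z|$ yields
\[
|f'(tz)| \le \frac{\|f\|_{\alpha*}}{(1-t^2|z|^2)^\alpha} \le \frac{\|f\|_{\alpha*}}{(1-t|z|)^\alpha}.
\]
Substituting this into the previous estimate gives
\[
|f(z)| \le |f(0)| + \|f\|_{\alpha*}\int_0^1 \frac{|z|}{(1-t|z|)^\alpha}\, dt.
\]

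Finally I would evaluate the integral by the substitution $u = 1 - t|z|$. When $\alpha \neq 1$, a direct computation gives
\[
\int_0^1 \frac{|z|}{(1-t|z|)^\alpha}\, dt = \int_{1-|z|}^1 u^{-\alpha}\, du = \frac{(1-|z|)^{1-\alpha}-1}{\alpha-1},
\]
while for $\alpha = 1$ the integral equals $-\log(1-|z|) = \log\frac{1}{1-|z|}$. Combining these two cases with the inequality above yields exactly the two displayed bounds.

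There is no real obstacle here; the only point where one must be a bit careful is replacing $(1-t^2|z|^2)^\alpha$ by $(1-t|z|)^\alpha$ to bring the integrand into a form that admits a clean antiderivative, and handling the $\alpha=1$ case separately since the antiderivative of $u^{-\alpha}$ changes form there.
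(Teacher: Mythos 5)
Your proof is correct: the radial integration $f(z)-f(0)=\int_0^1 f'(tz)\,z\,dt$, the seminorm bound $|f'(tz)|\le \|f\|_{\alpha*}(1-t^2|z|^2)^{-\alpha}\le \|f\|_{\alpha*}(1-t|z|)^{-\alpha}$, and the substitution $u=1-t|z|$ yield exactly the two displayed cases. The paper itself gives no proof of this lemma --- it simply cites \cite[Lemma 5.1]{Hu} --- and your argument is the standard one for this growth estimate, so there is nothing to reconcile.
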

\begin{theorem}
 For the Ces\`aro operator $\mathcal{C}$ acting on the space $\mathcal{B}^\alpha$, the following upper bounds hold for its norm:
\[
\|\mathcal{C}\|_{\mathcal{B}^\alpha \to \mathcal{B}^\alpha} \leq
\begin{cases}
\max \left\{ A, \dfrac{2^\alpha}{\alpha - 1} \right\}, & \text{for } 1 < \alpha \leq 2, \\[2em] 
\max \left\{ A, 2^\alpha \dfrac{2^{\alpha} - \alpha - 1}{(\alpha - 1)^2} \right\}, & \text{for } \alpha > 2,
\end{cases}
\]
where
\[
A = 1 + \left( \frac{2}{2\alpha - 1} \right)^{2\alpha - 1} \alpha^{\alpha} (\alpha - 1)^{\alpha - 1}.
\]

\end{theorem}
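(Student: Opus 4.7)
The plan is to bound $\|\mathcal{C}f\|_{\mathcal{B}^\alpha}=|\mathcal{C}f(0)|+\|\mathcal{C}f\|_{\alpha*}$ by separating the coefficients of $|f(0)|$ and $\|f\|_{\alpha*}$. Since $(\mathcal{C}f)(0)=f(0)$ from the power-series definition, $|\mathcal{C}f(0)|=|f(0)|$. For the seminorm term, I use the integral representation \eqref{eq:derivative} to write $(\mathcal{C}f)'(z)=I_1(z)+I_2(z)$, apply Lemma~6.1 to bound $|f(\phi_t(z))|\leq |f(0)|+\tfrac{(1-|\phi_t(z)|)^{1-\alpha}-1}{\alpha-1}\|f\|_{\alpha*}$ inside $I_1$, and apply the Bloch estimate $|f'(\phi_t(z))|\leq\|f\|_{\alpha*}/(1-|\phi_t(z)|^2)^\alpha$ inside $I_2$. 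This yields
$$\|\mathcal{C}f\|_{\mathcal{B}^\alpha}\leq (1+B_1)|f(0)|+B_2\|f\|_{\alpha*}\leq \max\{1+B_1,B_2\}\,\|f\|_{\mathcal{B}^\alpha},$$
where $B_1$ and $B_2$ are suprema over $r=|z|\in[0,1)$ of explicit integrals; the goal is to verify $1+B_1=A$ and that $B_2$ matches the second argument of the stated max.

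For $B_1$, I isolate the $|f(0)|$-piece of $I_1$. Using the crude bound $1-e^{-t}\leq 1$, the identity $\int_0^\infty \frac{e^{-t}}{(1-(1-e^{-t})z)^2}\,dt=\frac{1}{1-z}$ (verified by direct antidifferentiation in $t$), and $|1-z|\geq 1-|z|$, this piece is bounded pointwise by $|f(0)|/(1-|z|)$. Therefore
$$B_1\leq \sup_{0\leq r<1}\frac{(1-r^2)^\alpha}{1-r}=\sup_{0\leq r<1}(1+r)^\alpha(1-r)^{\alpha-1}.$$
Elementary differentiation locates the maximum at $r^\ast=1/(2\alpha-1)$, with value $\left(\frac{2}{2\alpha-1}\right)^{2\alpha-1}\alpha^\alpha(\alpha-1)^{\alpha-1}$. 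Adding the $|f(0)|$ coming from $|\mathcal{C}f(0)|$ recovers exactly the constant $A$.

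For $B_2$, I combine the $\|f\|_{\alpha*}$-piece of $I_1$ (via Lemma~6.1) with the full $I_2$ (via the Bloch estimate). Using the explicit formulas $1-\phi_t(r)=(1-r)/(1-(1-e^{-t})r)$ and $1-\phi_t(r)^2=(1-r)(1-(1-2e^{-t})r)/(1-(1-e^{-t})r)^2$, together with the estimate $1-|\phi_t(z)|^2\geq 1-|\phi_t(z)|$ in the $I_2$ denominator, the substitution $u=1-e^{-t}$ reduces each $t$-integral to an explicit antiderivative in $u$ and $r$. The supremum over $r\in[0,1)$ of the combined expression is attained as $r\to 1^-$. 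The principal obstacle is the case split at $\alpha=2$: the integrals with integrands of the form $(1-ur)^{\alpha-3}$ have polynomial antiderivative when $\alpha\neq 2$ and logarithmic antiderivative at $\alpha=2$, and the limiting value as $r\to 1^-$ combines the $I_1$ and $I_2$ contributions into $2^\alpha/(\alpha-1)$ for $1<\alpha\leq 2$ (where the $I_2$ term dominates) and into $2^\alpha(2^\alpha-\alpha-1)/(\alpha-1)^2$ for $\alpha>2$ (where $I_1$ contributes a nontrivial polynomial correction). Taking $\max\{A,B_2\}$ then yields the stated bounds.
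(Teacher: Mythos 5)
Your overall architecture coincides with the paper's: split $\|\mathcal{C}f\|_{\mathcal{B}^\alpha}$ into a coefficient of $|f(0)|$ and a coefficient of $\|f\|_{\alpha*}$, control the $f$-term with Lemma~6.1 and the $f'$-term with $|f'(w)|\leq\|f\|_{\alpha*}/(1-|w|^2)^\alpha$, and finish with $\max\{1+B_1,B_2\}$. Your treatment of $B_1$ is correct and reproduces the paper's constant $A$ (the identity $\int_0^\infty e^{-t}(1-(1-e^{-t})z)^{-2}\,dt=(1-z)^{-1}$ and the maximization at $r=1/(2\alpha-1)$ both check out). Note, though, that the paper does not use the representation \eqref{eq:derivative}: it rederives $(\mathcal{C}f)'$ by a path change in $\frac{1}{z^2}\int_0^z\bigl[\frac{\xi f'(\xi)}{1-\xi}+\frac{\xi f(\xi)}{(1-\xi)^2}\bigr]d\xi$, which puts the coefficient $\frac{e^{-2t}}{(1-(1-e^{-t})z)(1-z)}$ on $f$ and $\frac{e^{-2t}}{(1-(1-e^{-t})z)^2}$ on $f'$; the stated constants $\frac{2^\alpha}{\alpha-1}$ and $2^\alpha\frac{2^\alpha-\alpha-1}{(\alpha-1)^2}$ are produced by crude pointwise bounds on \emph{that} integrand, so they are not interchangeable with bounds for your $I_1,I_2$.

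The genuine gap is in $B_2$. With your decomposition, the radial expression is
\begin{equation*}
B_2(r)=\int_0^\infty\Bigl[\frac{(1+r)^\alpha e^{-t}(1-e^{-t})}{\alpha-1}\Bigl((1-r)(1-(1-e^{-t})r)^{\alpha-3}-\tfrac{(1-r)^\alpha}{(1-(1-e^{-t})r)^2}\Bigr)+\frac{(1+r)^\alpha e^{-2t}(1-(1-e^{-t})r)^{2\alpha-3}}{(1-(1-2e^{-t})r)^\alpha}\Bigr]dt,
\end{equation*}
and both of your key claims about it fail. First, the limit as $r\to1^-$ is not $\frac{2^\alpha}{\alpha-1}$ (nor $2^\alpha\frac{2^\alpha-\alpha-1}{(\alpha-1)^2}$): the entire $I_1$ contribution tends to $0$ (it carries a factor $(1-r)$ or $(1-r)^{\alpha-1}$ after the $u$-substitution), and the $I_2$ contribution tends pointwise to $e^{-(\alpha-1)t}$, so $\lim_{r\to1^-}B_2(r)=\frac{1}{\alpha-1}$ for every $\alpha>1$; in particular $I_1$ contributes no ``polynomial correction'' for $\alpha>2$. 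Second, the assertion that the supremum is attained as $r\to1^-$ is false in general: $B_2(0)=\int_0^\infty e^{-2t}dt=\frac12$, which already exceeds $\frac{1}{\alpha-1}$ for $\alpha>3$. So the step that is supposed to deliver the second argument of the $\max$ is not carried out, and the values you quote do not follow from the computation you describe. To close the gap you would either need to bound your $B_2$ honestly by uniform pointwise estimates on the integrand (the paper's device: $|1-(1-e^{-t})z|-e^{-t}|z|\geq1-|z|$ to cancel the $(1-|z|)^\alpha$, then $1-(1-e^{-t})r\geq e^{-t}$ when the exponent is negative and $\leq 2-e^{-t}$ when it is positive, leaving only $(1+r)^\alpha\leq2^\alpha$), and verify the resulting constants are at most the stated ones, or switch to the paper's representation of $(\mathcal{C}f)'$, for which those crude bounds yield exactly $\frac{2^\alpha}{\alpha-1}$ and $2^\alpha\frac{2^\alpha-\alpha-1}{(\alpha-1)^2}$ with no claim about where the supremum sits.
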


\begin{proof}
		Consider the upper bound of  $\|\mathcal{C}\|_{\mathcal{B}^\alpha\rightarrow\mathcal{B}^\alpha}$. Let $f\in \mathcal{B}^\alpha$, we have that
		\begin{align*}
			(\mathcal{C}f)^{\prime}(z) &= \int_{0}^{1} \frac{tf^{\prime}(tz)}{1-tz}  dt + \int_{0}^{1} \frac{tf(tz)}{(1-tz)^{2}}  dt\\
			&=\frac{1}{z^2}\int_{0}^{z} \frac{\xi f^{\prime}(\xi)}{1-\xi} + \frac{\xi f(\xi)}{(1-\xi)^{2}}  d\xi, \quad z \in \mathbb{D}.
		\end{align*}
		We can change the path of integration to
		\begin{align*}
		\xi = \phi_t(z) = \frac{e^{-t}z}{1 - (1 - e^{-t})z} , \quad 0 \leq t < \infty.
		\end{align*}
		Therefore, we obtain that
		\begin{align*}
			(\mathcal{C}f)'(z) =  \int_{0}^{\infty} \frac{e^{-2t}}{1 - \left( 1 - e^{-t} \right) z} \left[ \frac{f'(\phi_{t}(z))}{1 - \left( 1 - e^{-t} \right) z}  + \frac{f(\phi_{t}(z))}{1-z} \right]  dt
		\end{align*}
		Then
		\begin{align*}
		\|\mathcal{C}f(z)\|_{\mathcal{B}^\alpha}&=|\mathcal{C}f(0)|+\sup_{z\in\mathbb{D}}|\mathcal{C}f'(z)|(1-|z|^2)^\alpha\\
		&\leq|f(0)|+\sup_{z\in\mathbb{D}}(1-|z|^2)^\alpha\int_{0}^{\infty} \frac{e^{-2t}}{|1 - ( 1 - e^{-t} ) z|} \left(\frac{|f'(\phi_{t}(z))|}{|1 - ( 1 - e^{-t} ) z|}  + \frac{|f(\phi_{t}(z))|}{|1-z|} \right) dt.
		\end{align*}
		Since $|1-z| \geq 1-|z| = 1-r$, $|f'(\varphi_t(z))| \leq \frac{\|f\|_{\alpha^*}}{\left(1-|\phi_t(z)|^2\right)^\alpha}$, we can apply Lemma 6.1 to obtain the following estimate,
		\begin{align*}
		\|\mathcal{C}f(z)\|_{\mathcal{B}^\alpha} &\leq |f(0)| + \sup_{0 \leq |z| < 1} (1 - |z|^2)^\alpha \int_{0}^{\infty} [ \frac{e^{-2t} \| f \|_{\alpha^*}}{|1 - (1 - e^{-t}) z|^2  (1 - |\phi_t(z)|^2)^\alpha}\\ &+ \frac{e^{-2t} \left[ (1 - |\phi_t(z)|)^{1-\alpha} - 1 \right]  \| f \|_{\alpha^*}}{(\alpha - 1) (1 - |z|)  |1 - (1 - e^{-t}) z|} + \frac{e^{-2t}  |f(0)|}{ (1 - |z|)  |1 - (1 - e^{-t}) z|} ] dt\\
		&\leq\left(1+\sup_{0 \leq r < 1} (1 - r^2)^\alpha \int_{0}^{\infty}\frac{e^{-2t}}{ (1 - r)  (1 - (1 - e^{-t}) r)}  dt\right)|f(0)|\\
		&+\sup_{0 \leq |z| < 1} (1 - |z|^2)^\alpha\int_{0}^{\infty} [ \frac{e^{-2t}}{|1 - (1 - e^{-t}) z|^2  (1 - |\phi_t(z)|^2)^\alpha}\\&+ \frac{e^{-2t} \left[ (1 - |\phi_t(z)|)^{1-\alpha} - 1 \right] }{(\alpha - 1) (1 - |z|)  |1 - (1 - e^{-t}) z|}] \| f \|_{\alpha^*} dt\overset{def}{=}|f(0)|\cdot I+\| f \|_{\alpha^*}\cdot II.
		\end{align*}
		We estimate that
		$$I=1+\sup_{0 \leq r < 1} (1 - r^2)^\alpha \int_{0}^{\infty}\frac{e^{-2t} }{ (1 - r)  (1 - (1 - e^{-t}) r)}dt.$$
		\begin{align*}
		&\leq 1 + \sup_{0 \leq r < 1} (1+r)^{\alpha}  (1 - r)^{\alpha-1} \ \int_{0}^{\infty} e^{-t}  dt \\
		&= 1 + \sup_{0 \leq r < 1} (1 + r)^{\alpha} (1 - r)^{\alpha - 1}.
		\end{align*}
		Define the function $$f(r)=(1 + r)^{\alpha} (1 - r)^{\alpha - 1}.$$
  A straightforward calculation shows that $f(r)$ attains its maximum
		 when $r=\frac{1}{2\alpha-1}$. Therefore,
		 \begin{align}\label{1.1}
		 	1+\sup_{0 \leq r < 1} (1 - r^2)^\alpha \int_{0}^{\infty}\frac{e^{-2t} }{ (1 - r)  (1 - (1 - e^{-t}) r)}  dt\leq1+\left( \frac{2}{2\alpha - 1} \right)^{2\alpha - 1} \alpha^{\alpha} (\alpha - 1)^{\alpha - 1}\overset{def}{=}A.
		 \end{align}

		To bound $II$, we obtain that
		$$II=\sup_{0 \leq |z| < 1} (1 - |z|^2)^\alpha\int_{0}^{\infty}  \frac{e^{-2t} }{|1 - (1 - e^{-t}) z|^2  (1 - |\phi_t(z)|^2)^\alpha}+ \frac{e^{-2t} \left[ (1 - |\phi_t(z)|)^{1-\alpha} - 1 \right] }{(\alpha - 1) (1 - |z|)  |1 - (1 - e^{-t}) z|}dt.$$
		\begin{align*}
			&\leq\sup_{0 \leq |z| < 1} (1 - |z|^2)^\alpha\int_{0}^{\infty}  \frac{e^{-2t} }{|1 - (1 - e^{-t}) z|^2  (1 - |\phi_t(z)|)^\alpha}+ \frac{e^{-2t}  (1 - |\phi_t(z)|)^{1-\alpha}   }{(\alpha - 1) (1 - |z|)  |1 - (1 - e^{-t}) z|}dt\\
			&= \sup_{0 \leq |z| < 1} (1 - |z|^2)^{\alpha}  \int_{0}^{\infty}  \frac{e^{-2t}  | 1 - (1 - e^{-t}) z |^{\alpha - 2}}{\left( | 1 - (1 - e^{-t}) z| - e^{-t} |z| \right)^{\alpha}} + \frac{e^{-2t} |1 - (1 - e^{-t}) z|^{\alpha - 2}}{(\alpha - 1)  (1 - |z|)  \left(| 1 - (1 - e^{-t}) z | - e^{-t} |z| \right)^{\alpha - 1}}  dt\\
			&\leq \sup_{0 \leq |z| < 1} (1 - |z|^2)^{\alpha}  \int_{0}^{\infty}  \frac{e^{-2t}  | 1 - (1 - e^{-t}) z |^{\alpha - 2}}{(  1-|z| )^{\alpha}} + \frac{e^{-2t} |1 - (1 - e^{-t}) z|^{\alpha - 2}}{(\alpha - 1)  (1 - |z|) ^{\alpha}}  dt\\
			&=\sup_{0 \leq |z| < 1} (1+|z|)^{\alpha}\frac{\alpha}{\alpha-1}\int_{0}^{\infty}e^{-2t}  | 1 - (1 - e^{-t}) z |^{\alpha - 2}dt.
		\end{align*}
	Now, we consider the case $1<\alpha \leq2$. We estimate the supremum as follows:
		$$\sup_{0 \leq |z| < 1} (1+|z|)^{\alpha}\frac{\alpha}{\alpha-1}\int_{0}^{\infty}e^{-2t}  | 1 - (1 - e^{-t}) z |^{\alpha - 2}dt$$
		\begin{align}\label{1.2}
		&\leq\sup_{0 \leq r < 1} (1+r)^{\alpha}\frac{\alpha}{\alpha-1}\int_{0}^{\infty}\frac{e^{-2t}}{(1 - (1 - e^{-t})r)^{2-\alpha}}  dt \notag\\
		&=2^\alpha \frac{\alpha}{\alpha-1}\int_{0}^{\infty}\frac{e^{-2t}}{e^{-(2-\alpha)t}}dt \notag\\
		&=\frac{2^\alpha}{\alpha-1}
		\end{align}
	Next, we consider the case  $\alpha > 2$. A similar estimation yields:
		$$\sup_{0 \leq |z| < 1} (1+|z|)^{\alpha}\frac{\alpha}{\alpha-1}\int_{0}^{\infty}e^{-2t}  | 1 - (1 - e^{-t}) z |^{\alpha - 2}dt$$
		\begin{align}\label{1.3}
			&\leq \sup_{0 \leq r < 1} (1+r)^{\alpha}\frac{\alpha}{\alpha-1}\int_{0}^{\infty}\frac{e^{-2t}}{(1 - (e^{-t}-1)r)^{2-\alpha}}  dt\notag\\
			&=2^\alpha \frac{\alpha}{\alpha-1}\int_{0}^{\infty}\frac{e^{-2t}}{(2-e^{-t})^{2-\alpha}}dt\notag\\
			&=2^\alpha \frac{2^{\alpha} - \alpha - 1}{(\alpha - 1)^2}
		\end{align}
		Combining (\ref{1.1}), (\ref{1.2}) and (\ref{1.3}), we finally complete the proof.
		
	\end{proof}

\begin{theorem}
	For $1<\alpha<\infty$, we have
	$$\|\mathcal{C}\|_{\mathcal{B}^\alpha\rightarrow \mathcal{B}^\alpha}\geq\frac{3}{2}$$
\end{theorem}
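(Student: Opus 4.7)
The approach is to exhibit a single explicit test function and compute the ratio $\|\mathcal{C}f\|_{\mathcal{B}^\alpha}/\|f\|_{\mathcal{B}^\alpha}$ directly; no delicate optimization is needed. The most economical candidate is the constant function $f\equiv 1$, which trivially lies in $\mathcal{B}^\alpha$ with $\|f\|_{\mathcal{B}^\alpha}=|f(0)|+\sup_{z}(1-|z|^2)^\alpha|f'(z)|=1$.

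The plan is to first compute $\mathcal{C}(f)$ explicitly. Applying the Taylor-series representation in \eqref{eq:cesaro-representations} with $a_0=1$ and $a_k=0$ for $k\geq 1$, one obtains
\[
\mathcal{C}(f)(z)=\sum_{n=0}^{\infty}\frac{z^n}{n+1}=-\frac{\log(1-z)}{z},
\]
so that $\mathcal{C}(f)(0)=1$. Differentiating term by term gives
\[
(\mathcal{C}f)'(z)=\sum_{n=1}^{\infty}\frac{n}{n+1}z^{n-1},
\]
and evaluating at $z=0$ yields $(\mathcal{C}f)'(0)=\tfrac12$.

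Next I would lower-bound the seminorm of $\mathcal{C}(f)$ by plugging in $z=0$ into the supremum:
\[
\|\mathcal{C}f\|_{\alpha*}=\sup_{z\in\mathbb{D}}(1-|z|^2)^\alpha|(\mathcal{C}f)'(z)|\;\geq\;(1-0)^\alpha\cdot\tfrac12=\tfrac12.
\]
Combining this with $|\mathcal{C}(f)(0)|=1$, we get $\|\mathcal{C}f\|_{\mathcal{B}^\alpha}\geq 1+\tfrac12=\tfrac32$. Since $\|f\|_{\mathcal{B}^\alpha}=1$, the desired lower bound $\|\mathcal{C}\|_{\mathcal{B}^\alpha\to\mathcal{B}^\alpha}\geq\tfrac32$ follows immediately, uniformly in $\alpha>1$.

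There is essentially no obstacle here; the argument works precisely because the Bloch-type norm for this space adds the value at the origin to the derivative seminorm, and the Cesàro operator inflates both pieces for the constant function. The only small check is that the value $(\mathcal{C}f)'(0)=\tfrac12$ is correctly extracted from the series (it is the coefficient of $z$ in $\mathcal{C}(f)(z)$, namely $\tfrac{1}{2}$), after which the $(1-|z|^2)^\alpha$ factor causes no loss at $z=0$ regardless of $\alpha$.
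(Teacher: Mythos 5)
Your proposal is correct and is essentially the paper's own argument: both use the constant test function $f\equiv 1$ (with $\|f\|_{\mathcal{B}^\alpha}=1$) and extract the lower bound $\tfrac32$ from the value $|\mathcal{C}(1)(0)|=1$ plus the derivative term evaluated at the origin, which equals $\tfrac12$. The only cosmetic difference is that you read off $(\mathcal{C}f)'(0)=\tfrac12$ from the Taylor coefficient, while the paper obtains the same number as $\int_0^\infty e^{-t}(1-e^{-t})\,dt$ via the weighted-composition representation.
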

\begin{proof}
	Let $\alpha\not=1$ and $z\in\mathbb{D}$.Define$$f_\alpha=1$$
	we have the estimate
	$$\|f_\alpha\|_{\mathcal{B}^\alpha}=1$$
	
	According to (\ref{eq:derivative}), we obtain that
	\begin{align*}
		\|\mathcal{C}\|_{\mathcal{B}^\alpha\rightarrow\mathcal{B}^\alpha}&\geq\frac{\|\mathcal{C}f_\alpha\|_{\mathcal{B}^\alpha}}{\|f_\alpha\|_{\mathcal{B}^\alpha}}\\
		&=|\mathcal{C}f(0)|+\sup_{z\in\mathbb{D}}(1-|z|^2)^\alpha|\mathcal{C}f_\alpha'(z)|\\
		&=1+\sup_{z\in\mathbb{D}}(1-|z|^2)^\alpha\big|\int_{0}^{\infty}\frac{e^{-t}(1-e^{-t})}{(1-(1-e^{-t})z)^2} dt\big|\\
		&\geq1+\sup_{0\leq r<1}(1-r^2)^\alpha\int_{0}^{\infty}\frac{e^{-t}(1-e^{-t})}{(1-(1-e^{-t})r)^2} dt\\
		&=1+\lim\limits_{r\rightarrow0}(1-r^2)^\alpha\int_{0}^{\infty}\frac{e^{-t}(1-e^{-t})}{(1-(1-e^{-t})r)^2} dt\\
		&=1+\int_{0}^{\infty}e^{-t}(1-e^{-t}) dt\\
		&=1+\frac{1}{2}\\
		&=\frac{3}{2}.
	\end{align*}
	 At this point, we have completed all the proof.
\end{proof}

\section{Norm of Ces\`aro operator $\|\mathcal{C}\|_{H^\infty\rightarrow \mathcal{B}^\alpha}$}

In this section we offer both the lower and upper bounds of  the norm of the Ces\`aro operator from the Hardy space  $H^\infty$ to  $\alpha$-Bloch spaces
$\mathcal{B}^\alpha$ with $\alpha\geq 1$.
\begin{theorem} For $\alpha\geq 1$, we obtain that
\begin{align*}
		3 &\leq \|\mathcal{C}\|_{H^\infty\rightarrow \mathcal{B}^\alpha}\leq 4~~~  \mbox{if}~~~ \alpha=1,\\
		\frac 32 &\leq \|\mathcal{C}\|_{H^\infty\rightarrow \mathcal{B}^\alpha}\leq 4~~~ \mbox{if}~~~ \alpha>1.
	\end{align*}
Moreover, $\mathcal{C}$ is not bounded from $H^\infty$ to $\mathcal{B}^\alpha$ when $0<\alpha< 1$.
	
	\end{theorem}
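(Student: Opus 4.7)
For the lower bounds, I take the constant function $f_0\equiv 1\in H^\infty$, which has $\|f_0\|_{H^\infty}=1$ and yields the explicit image
\[
\mathcal{C}(f_0)(z) = -\frac{\log(1-z)}{z},\qquad \mathcal{C}(f_0)'(z) = \frac{1}{z(1-z)} + \frac{\log(1-z)}{z^2},
\]
with $\mathcal{C}(f_0)(0)=1$. For $\alpha=1$, the computation
\[
(1-r^2)\mathcal{C}(f_0)'(r) = \frac{1+r}{r} + \frac{(1-r^2)\log(1-r)}{r^2}\longrightarrow 2 \quad (r\to 1^-)
\]
furnishes the required contribution to the semi-norm, so $\|\mathcal{C}\|_{H^\infty\to\mathcal{B}}\ge 1+2=3$. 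For $\alpha>1$ this quantity times $(1-r^2)^{\alpha-1}$ vanishes at $r\to 1^-$, so I instead test at $r=0$, where $\mathcal{C}(f_0)'(0)=1/2$ yields $\|\mathcal{C}\|_{H^\infty\to\mathcal{B}^\alpha}\ge 3/2$.

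For the upper bound, I observe that $(1-|z|^2)^\alpha \le 1-|z|^2$ whenever $\alpha\ge 1$ and $z\in\mathbb{D}$, so the $\mathcal{B}^\alpha$-norm is dominated by the $\mathcal{B}$-norm and it suffices to show $\|\mathcal{C}\|_{H^\infty\to\mathcal{B}}\le 4$. Given $f\in H^\infty$ with $\|f\|_\infty\le 1$, the inequality $|\mathcal{C}f(0)|=|f(0)|\le 1$ disposes of the $|f(0)|$-part. For the semi-norm I differentiate the integral representation
\[
(\mathcal{C}f)'(z) = \int_0^1\frac{tf'(tz)}{1-tz}\,dt + \int_0^1\frac{tf(tz)}{(1-tz)^2}\,dt,
\]
then apply $|f(tz)|\le 1$, the Schwarz--Pick estimate $|f'(tz)|\le(1-t^2|z|^2)^{-1}$, and $|1-tz|\ge 1-tr$ with $r=|z|$. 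This reduces the problem to $(1-r^2)|(\mathcal{C}f)'(z)|\le F_1(r)+F_2(r)$, where
\[
F_1(r) := (1-r^2)\int_0^1 \frac{t\,dt}{(1-tr)^2},\qquad F_2(r) := (1-r^2)\int_0^1 \frac{t\,dt}{(1-tr)^2(1+tr)}.
\]

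The key step is to show $F_1(r)+F_2(r)\le 3$ on $[0,1)$, and I plan to achieve this via the sharper separate bounds $F_1\le 2$ and $F_2\le 1$. Partial fractions (after substituting $u=tr$) yield the closed forms
\[
F_1(r) = \frac{1+r}{r} + \frac{(1-r^2)\log(1-r)}{r^2},\qquad F_2(r) = \frac{1+r}{2r} + \frac{(1-r^2)}{4r^2}\log\frac{1-r}{1+r}.
\]
Rearranging, $F_1\le 2$ reduces to $\xi(r):= -r-(1+r)\log(1-r)\ge 0$, and $F_2\le 1$ reduces to $\psi(r):= (1+r)\log\frac{1+r}{1-r}-2r\ge 0$. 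Both follow from $\xi(0)=\xi'(0)=\psi(0)=\psi'(0)=0$ together with the strict positivity of $\xi''(r)=\frac{1}{1-r}+\frac{2}{(1-r)^2}$ and $\psi''(r)=\frac{2}{1-r^2}+\frac{2}{(1-r)^2}$ on $(0,1)$. Combined with the $|f(0)|$-estimate this yields $\|\mathcal{C}f\|_{\mathcal{B}}\le 4$. I expect this calculus step to be the main obstacle: both $F_j$ have integrands that are individually singular at $r=1$, and only the logarithmic cancellations inside them keep the sum just below the critical value $3$, which is attained only in the limit.

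Finally, unboundedness for $0<\alpha<1$ follows again from $f_0\equiv 1$: the dominant term $1/(r(1-r))$ in $\mathcal{C}(f_0)'(r)$ forces $(1-r^2)^\alpha|\mathcal{C}(f_0)'(r)|$ to grow at the rate $(1-r)^{\alpha-1}$, which tends to $\infty$ as $r\to 1^-$ when $\alpha<1$. Hence $\mathcal{C}(f_0)\notin\mathcal{B}^\alpha$ while $\|f_0\|_{H^\infty}=1$, so $\mathcal{C}$ cannot be bounded from $H^\infty$ into $\mathcal{B}^\alpha$ in this range.
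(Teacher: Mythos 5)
Your proof is correct and follows essentially the same route as the paper: the constant test function $f_0\equiv 1$ for the lower bounds, for the value $\tfrac12$ at $r=0$ when $\alpha>1$, and for the unboundedness when $0<\alpha<1$; and, for the upper bound, the differentiated integral representation combined with the Schwarz--Pick estimate $(1-|tz|^2)|f'(tz)|\le\|f\|_\infty$ — the only difference being that where the paper bounds the single integrand $\frac{2t+rt^2}{(1-tr)^2(1+tr)}$ by computing a closed form $h(r)$ and proving $\sup h=3$ via a power series with positive coefficients, you split it as $F_1+F_2$ and prove the separate sharp bounds $F_1\le 2$ and $F_2\le 1$ by convexity, which yields the same constant $4$ since both bounds are attained in the limit $r\to 1^-$. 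One small point in your favor: in the unboundedness argument for $0<\alpha<1$ the blow-up of $(1-r^2)^\alpha|\mathcal{C}(f_0)'(r)|$ indeed occurs as $r\to 1^-$, as you state, whereas the paper's displayed limit incorrectly attributes it to $r\to 0^+$.
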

	
		\begin{proof}
 let $f=1\in H^\infty$, then
  $$\|f\|_{\infty}=1,~~\mathcal{C}(1)(z)=\frac{1}{z}\log\frac{1}{1-z},~~\mathcal{C}(1)(0)=1.$$
	We have that
	\begin{align*}
		\|\mathcal{C}\|_{H^\infty\rightarrow \mathcal{B}^{\alpha}}&\geq|\mathcal{C}(f)(0)|+\|\mathcal{C}(f)(z)\|_*\\
		&=1+\sup_{z\in\mathbb{D}}(1-|z|^2)^{\alpha}|\mathcal{C}(1)'(z)|\\
		&=1+\sup_{z\in\mathbb{D}}(1-|z|^2)^{\alpha}\big|\frac{1}{z(1-z)}-\frac{1}{z^2}\log\frac{1}{1-z} \big|\\
		&\geq1+\sup_{0\leq r<1}(1-r^2)^{\alpha}\big|\frac{1}{r(1-r)}-\frac{1}{r^2}\log\frac{1}{1-r}\big|.
	\end{align*}
Let $f(r)=\displaystyle(1-r^2)^{\alpha}\big|\frac{1}{r(1-r)}-\frac{1}{r^2}\log\frac{1}{1-r}\big|$.  By a simple calculation, we have that
\begin{align*}
		\sup_{0\leq r<1}f(r)&\geq \lim_{r\to 0^+} f(r)=+\infty~~~  \mbox{if}~~~ 0<\alpha< 1,\\
		\sup_{0\leq r<1}f(r) &= \lim_{r\to 1} f(r)=2~~~ \mbox{if}~~~ \alpha=1.\\
  	\end{align*}
Therefore, $\mathcal{C}$ is not bounded from $H^\infty$ to $\mathcal{B}^\alpha$ when $0<\alpha< 1$.
For the case $\alpha>1$, similar to the proof of Theorem 6.2, we have that

\begin{align*}
		\|\mathcal{C}\|_{H^\infty\rightarrow \mathcal{B}^\alpha}&\geq|\mathcal{C}(1)(0)|+\sup_{z\in\mathbb{D}}(1-|z|^2)^\alpha|\mathcal{C}(1)'(z)|\\
		&=1+\sup_{z\in\mathbb{D}}(1-|z|^2)^\alpha\big|\int_{0}^{\infty}\frac{e^{-t}(1-e^{-t})}{(1-(1-e^{-t})z)^2} dt\big|\\
		&\geq1+\sup_{0\leq r<1}(1-r^2)^\alpha\int_{0}^{\infty}\frac{e^{-t}(1-e^{-t})}{(1-(1-e^{-t})r)^2} dt\\
				&=\frac{3}{2}.
		\end{align*}
This completes the proof of the lower bound. It remains to prove the upper bound for the case $\alpha\geq 1$.

 Let $f \in H^{\infty}$. Then
			\begin{align}{\label{7.1}}
				\mathcal{C}(f)'(z) = \int_{0}^{1} \frac{t f'(t z)}{1 - t z}  dt + \int_{0}^{1} \frac{t f(t z)}{(1 - t z)^{2}}  dt, \quad z \in \mathbb{D}.
			\end{align}
			
			By \cite[Proposition 5.1]{Zhu}, we have that
			\begin{align}{\label{7.2}}
				(1 - |z|^{2}) |f'(z)| \leq \|f\|_{\infty}.
			\end{align}
			for all $f \in H^{\infty}$ and $z \in \mathbb{D}$.		

		Using (\ref{7.1}) and (\ref{7.2}), we obtain that
		\begin{align*}
			\|\mathcal{C}f\|_{\mathcal{B}^\alpha}&=|\mathcal{C}f(0)|+\sup_{z\in\mathbb{D}}(1-|z|^2)^\alpha|(\mathcal{C}f)'(z)| \\
			&=|\int_{0}^{1}f(0)dt|+\sup_{z\in\mathbb{D}}(1-|z|^2)^\alpha|(\mathcal{C}f)'(z)| \\
			&=\left|\int_{0}^{1} f(0)dt\right|+\sup _{z\in\mathbb{D}}\left(1-|z|^{2}\right)^{\alpha}\left|\int_{0}^{1}\frac{tf^{\prime}(tz)}{1-tz}dt+\int_{0}^{1}\frac{tf(tz)}{(1-tz)^{2}}dt\right| \\
			&\leq\left(1+\sup_{0\leq r<1}\left(1-r^{2}\right)^{\alpha}\int_{0}^{1}\frac{2t+rt^{2}}{(1-tr)^{2}(1+tr)}dt\right)\|f\|_{\infty} \\
			&=\left(1+\sup_{0<r<1}\frac{(1-r^{2})^{\alpha}}{r^2}\left\{\frac{3r}{2(1-r)}-\frac{1}{4}\ln\frac{1+r}{(1-r)^5}\right\}\right)\|f\|_{\infty}.
		\end{align*}
		Let $h(r)=\displaystyle\frac{(1-r^{2})}{r^2}\left\{\frac{3r}{2(1-r)}-\frac{1}{4}\ln\frac{1+r}{(1-r)^5}\right\}, 0< r<1.$
		 We find that it can be shown that
		$$h(r)=1+r+\sum_{n=2}^{\infty} \left( \frac{5}{2n(n+2)} + \frac{(-1)^{n-1}}{2n(n+2)} \right) r^n,$$
				then
		\begin{align*}
			h'(r)=1+\sum_{n=2}^{\infty} \left( \frac{5}{2(n+2)} + \frac{(-1)^{n-1}}{2(n+2)} \right) r^{n-1}>0.
		\end{align*}
		It implies that $h(r)$  is increasing and  $\sup_{0<r<1}h(r)=\lim_{r\rightarrow1^-} h(r)=3$.
		
Hence, we obtain that
	$$
\|\mathcal{C}f\|_{\mathcal{B}^\alpha}\leq 1+\sup_{0<r<1}h(r)(1-r^2)^{\alpha-1}\leq 1+3\sup_{0<r<1}(1-r^2)^{\alpha-1}=4.
 $$		

		We finish the proof of the theorem.

\end{proof}

\end{document}